\documentclass[english]{smfart}
\usepackage{a4wide}

\usepackage[T1]{fontenc}   
\usepackage[latin1]{inputenc}
\usepackage[francais]{babel} 
\usepackage{amsmath}
\usepackage{amsfonts}
\usepackage{dsfont}
\usepackage{amssymb}
\usepackage{smfthm}
\usepackage{mathrsfs}
\usepackage{color}
\usepackage{enumerate}
\usepackage{hyperref}
\usepackage[pdftex,all]{xy}
\usepackage{tikz}

\theoremstyle{plain}
\newtheorem{thm}{Theorem}
{\bfseries}{\itshape}
{\bfseries}{\itshape}
\newtheorem{proposition}[thm]{Proposition}

\newtheorem{lem}[thm]{Lemma}

\newtheorem{question}{Question}
\theoremstyle{definition}
\newtheorem{definition}[thm]{Definition}
\newtheorem{example}[thm]{Example}

\theoremstyle{remark}
\newtheorem{remark}[thm]{Remark} \renewcommand{\leq}{\leqslant} 
\renewcommand{\geq}{\geqslant}

\newcommand{\F}{\ensuremath{\mathbf{F}}}
\newcommand{\Fq}{\ensuremath{\mathbf{F}_q}}

\newcommand{\code}[1]{\ensuremath{\mathcal{#1}}}

\newcommand{\CC}{\code{C}}

\newcommand{\CL}[3]{\mathcal C_L({#1},{#2},{#3})}
\newcommand{\cP}{\mathcal{P}}

\renewcommand{\curve}[1]{\ensuremath{\mathscr{#1}}}
\def\X{\curve{X}}
\def\Y{\curve{Y}}
\newcommand{\HC}{\mathscr{H}_2(\CC)}

\renewcommand{\P}{\mathbf{P}}

\newcommand{\word}[1]{\ensuremath{\boldsymbol{#1}}}
\newcommand{\av}{\word{a}}
\newcommand{\bv}{\word{b}}
\newcommand{\cv}{\word{c}}

\newcommand{\ev}{\word{e}}

\newcommand{\xv}{{\word{x}}}
\newcommand{\yv}{{\word{y}}}

\newcommand{\map}[4]{\left\{
\begin{array}{ccc}
#1 & \longrightarrow & #2 \\ #3 & \longmapsto & #4 
\end{array}
\right.}

\def\ie{{\em i.e}}
\newcommand{\eqdef}{\stackrel{\text{def}}{=}} \title[How arithmetic and geometry make codes better]{How arithmetic
  and geometry\\ make error correcting codes better}

\author{Alain Couvreur}
\address{Inria}
\address{Laboratoire LIX, CNRS UMR 7161, \'Ecole Polytechnique,
  91120 Palaiseau, France}
\email{alain.couvreur@inria.fr}

 \begin{abstract}
  This note completes a talk given at the conference {\em
    Curves over Finite Fields: past, present and future} celebrating
  the publication the book {\em Rational Points on Curves over Finite
    Fields} by J.-P. Serre and organised at Centro de ciencias de
  Benasque in june 2021. It discusses a part of the history of
  algebraic geometry codes together with some of their recent
  applications. A particular focus is done on the ``multiplicative''
  structure of these codes, {\em i.e.} their behaviour with respect
  to the component wise product. Some open questions are
  raised and discussed.
\end{abstract}
 
\begin{document}
\maketitle

\tableofcontents

\section*{Introduction}
The story of algebraic geometry codes is a fascinating one for both
mathematicians and computer scientists. It lies at the crossroad of
coding theory, number theory and algebraic geometry and testifies of
the need for such interactions between different communities to
provide efficient but also elegant solutions to practical problems.

Coding theory began in the 50's after the works of Claude Shannon
\cite{shannon1948} and Richard Hamming \cite{hamming1950}.  The former
developing a stochastic approach for error correction and building the
foundations of information theory and the latter proposing a more
combinatorial point of view. The original motivations came from the need
for error correction in computations and communications.  Our history
is full of technologies who would never have been as efficient as they
were without error correcting codes.  One can for instance mention
NASA's program Mariner 9 whose objective was to transmit pictures from
Mars and where the error tolerance has been guaranteed by Reed--Muller
codes.

Today, coding theory has become a rich and wide mathematical theory
with strong connections with combinatorics, probabilities, arithmetic,
geometry and algorithmics. More importantly, the field of applications
of coding theory impressively broadened in 70 years and introducing
codes as objects permitting good communications across noisy channels
would be indisputably a very restrictive description. Coding theory
finds now numerous other applications, for instance in symmetric
cryptography, in public key cryptography or in cloud storage. Results
from coding theory are also used to address problems in other
mathematical areas such as combinatorics or number theory, or in
theoretical computer science with famous results such as the
celebrated PCP theorem \cite{ALMSS98}.

Inside coding theory, algebra always played a particular rule.  The
use of algebraic structures to design codes is almost as old as coding
theory itself. Reed--Muller codes \cite{reed54} have been designed
using multivariate polynomials over $\F_2$, then Reed--Solomon ones
\cite{reed60siam} are constructed thanks to univariate polynomials and
Berlekamp \cite{B68,B15} developed a decoding algorithm based on
algebraic features. Interestingly, Berlekamp's interest on algebraic
codes encouraged him to focus on some computational aspects of finite
fields which lead him to discover his famous algorithm for factorizing
polynomials over finite fields. In summary, algebra had an
indisputable impact on the development of coding theory but the
converse is also true.

Algebraic geometry enters the game in the early 80's with Goppa's
seminal article \cite{goppa1981dansssr}, followed by the unexpected
and rather counter intuitive result proved by Tsfasman, Vl\u{a}du\c{t}
and Zink \cite{tsfasman1982mn} showing that, some codes from modular
or Shimura curves have better parameters than random codes. This
breakthrough motivated the impressive development of the theory of
algebraic geometry codes which today represent a broad and rich
research area with a significant number of independent applications.
The present article presents a part of this story. Then a particular
focus is done on the behaviour of algebraic geometry codes with respect
to the component wise product and its
various applications, to decoding, cryptanalysis or secret sharing.

 \section{Prerequisites in coding theory}\label{sec:coding}
The central objects of our study are linear error correcting codes.
In the sequel, we denote by {\em code} any linear subspace $\CC$ of
$\Fq^n$ endowed with the {\em Hamming metric}. The latter being
defined as
\[
  \forall \xv, \yv \in \Fq^n,\qquad d_H(\xv, \yv) \eqdef
  \Big| \Big\{i\in \{1,\dots, n\} ~|~ x_i \neq y_i\Big\} \Big|.
\]
The {\em Hamming weight} of a vector $\xv \in \Fq^n$ is defined
as $w_H (\xv) \eqdef d_H(\xv, 0)$. 

Given a code $\CC$, its {\em parameters} denoted $[n,k,d]_q$ are
respectively:
\begin{itemize}
\item its {\em length} $n$, {\ie} the dimension of its ambient space $\Fq^n$;
\item its {\em dimension} $k$ as an $\Fq$--linear space;
\item its {\em minimum distance} $d_{\text{min}}(\CC)$ or $d$ when
  there is no ambiguity on the considered code, is the minimum Hamming
  distance between two distinct elements of $\CC$ which can be
  formally defined as
  \[
    d_{\text{min}}(\CC) \eqdef \min_{\cv \in \CC \setminus \{0\}}
    \Big|\{i \in \{1, \dots, n\} ~|~ c_i \neq 0\}\Big|.
  \]
\end{itemize}
A {\em decoding algorithm} takes as input an ambient vector
$\yv \in \Fq^n$ and outputs a word $\cv$ close to $\yv$ (preferently
the closest one) or fails (for instance if $\yv$ is too far from any
element of $\CC$).  Finally, given a code $\CC \subseteq \Fq^n$ one
sometimes need to consider its {\em dual} $\CC^\perp$ which is its
orthogonal space with respect to the canonical inner
product: \[\langle \xv , \yv \rangle \eqdef \sum_{i=1}^n x_iy_i.\]

If it is elementary to generate a code, the estimate of its minimum
distance such as the design of an efficient (let us say polynomial
time) decoding algorithm are very difficult tasks\footnote{The
  decoding problem has even been proved to be NP--complete in
  \cite{berlekamp1978it}.} which turn out to be intractable for the
majority of the codes.  Actually, the aforementioned problems are
solvable only for very particular codes which are equipped with a
particular structure. For this sake, algebra provides remarkable
constructions. The most elementary one being that of Reed and Solomon
\cite{reed60siam}.

\begin{definition}[Reed--Solomon codes]
  Let $\xv = (x_1, \dots, x_n)$ be an $n$--tuple of distinct elements
  of $\Fq$ and a positive integer $k<n$, we define
  \[
    RS_k(\xv) \eqdef \{(f(x_1), \dots, f(x_n))) ~|~ f \in \Fq[X],\ \deg f < k\}.
  \]
\end{definition}

Such a code is known to have parameters $[n,k,n-k+1]$ and the proof of
that fact rests essentially on the elementary property that a nonzero
polynomial of degree $<k$ has less than $k$ distinct roots. Moreover,
these codes are optimal in the sense that they reach the so called
{\em Singleton bound} asserting that any $[n,k,d]$ code satisfies
\begin{equation}\label{eq:Singleton}
  d \leq n-k+1.
\end{equation}
Finally, after the works of Berlekamp \cite{WB83,B15}, such codes
benefit from a decoding algorithm correcting in polynomial time an
amount of errors up to half the minimum distance. Using fast
arithmetic, decoding can actually be performed in quasi--linear time.
Finally, after the works of Sudan and Guruswami--Sudan in the late
90's \cite{sudan97jcomp,guruswami99it} it is now known that correcting
efficiently an amount of errors beyond half the minimum distance is
possible for such codes at the cost of possibly (and rarely) returning
a list of closest vectors instead of a single one.

 \section{Algebraic geometry codes}
Reed--Solomon codes are also known to have a drawback: they should be
defined over a field which is at least as large as the length of the
code. On the other hand, many computer science applications look for
long codes over very small fields. This issue has been addressed in
the early 80's by the Russian mathematician V.D.~Goppa
\cite{goppa1981dansssr} who suggested to replace polynomials by
functions\footnote{Actually, Goppa's original publication proposed to
  design codes by evaluating residues of some differential forms
  living in some Riemann--Roch space of differentials. However, its
  presentation can easily be reformulated into an equivalent one with
  rational functions.} in a Riemann--Roch space of a given curve $\X$
and the evaluation values $x_1, \dots, x_n$ by some rational points of
$\X$.

\begin{definition}\label{def:AGcodes}
  Let $\X$ be a smooth projective absolutely irreducible curve
  over a field $\Fq$. Let $\cP = (P_1, \dots, P_n)$ be an ordered
  $n$--tuple of distinct rational points of $\X$ and $G$ be a rational
  divisor on $\X$ whose support avoids\footnote{This restriction on the
    support of $G$ can easily be relaxed. See
    \cite[Rem. 15.3.8]{couvreur2021}.} $\cP$. The algebraic geometry
  code (AG code) associated to the triple $(\X, \cP, G)$ is defined as
  \[
    \CL{\X}{\cP}{G} \eqdef \{(f(P_1), \dots, f(P_n)) ~|~ f\in L(G)\},
  \]
  where $L(G)$ denotes the Riemann--Roch space associated to $G$.
\end{definition}

This code construction is of deep interest for many reasons.
First, because of their excellent parameters and second, because
the algebraic decoding algorithms existing for Reed--Solomon
naturally extend to AG codes.

\begin{proposition}\label{prop:param_AG_codes}
  In the context of Definition~\ref{def:AGcodes}, let $g$ be the genus of $\X$.
  Then, the code
  $\CL{\X}{\cP}{G}$ has parameters $[n,k,d]$
  where
    \begin{align*}
      k & \geq \deg G + 1 - g \quad \text{with equality if} \quad
          \deg G > 2g-2; \\
      d & \geq n - \deg G.
    \end{align*}
\end{proposition}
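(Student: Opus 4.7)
The plan is to study the evaluation map
\[
  \mathrm{ev}_{\cP} : \map{L(G)}{\Fq^n}{f}{(f(P_1), \dots, f(P_n))}
\]
whose image is, by definition, the code $\CL{\X}{\cP}{G}$. Since the support of $G$ avoids $\cP$, each evaluation $f(P_i)$ makes sense, so the map is well defined. The dimension $k$ of the code equals $\dim L(G) - \dim \ker(\mathrm{ev}_{\cP})$, and a function $f \in L(G)$ lies in the kernel if and only if $f$ vanishes at every $P_i$, i.e.\ $f \in L(G - D)$ where $D \eqdef P_1 + \cdots + P_n$.

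For the bound on $k$, the strategy is to combine two ingredients. First, the Riemann--Roch theorem gives $\dim L(G) \geq \deg G + 1 - g$, with equality as soon as $\deg G > 2g-2$ (in that regime $L(K-G) = 0$ for $K$ a canonical divisor, so the correction term in Riemann--Roch vanishes). Second, one observes that in the interesting range $\deg G < n$ one has $\deg(G-D) < 0$, so $L(G-D) = 0$ and the evaluation map is injective; otherwise the claimed inequality on $k$ is either trivial or needs to be read together with the obvious upper bound $k \leq n$. Combining these gives $k \geq \dim L(G) \geq \deg G + 1 - g$, with equality under the stated hypothesis.

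For the bound on $d$, let $\cv \in \CL{\X}{\cP}{G}$ be a nonzero codeword, and let $f \in L(G)$ be a preimage. If $\cv$ has Hamming weight $w$, then $f$ vanishes on a set $Z \subseteq \{P_1, \dots, P_n\}$ of size $n - w$. Denoting $E = \sum_{P \in Z} P$, this means $f$ lies in the nonzero space $L(G - E)$. Since any divisor supporting a nonzero global section has nonnegative degree, we get $\deg G - (n-w) \geq 0$, i.e.\ $w \geq n - \deg G$, which is the announced lower bound on the minimum distance.

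There is no real obstacle here beyond the standard input from Riemann--Roch and the fact that effective divisors of negative degree do not exist; the only point that deserves a touch of care is ensuring the Riemann--Roch equality regime is handled cleanly (which is why the hypothesis $\deg G > 2g-2$ appears) and that the evaluation map is injective whenever this is needed, for which the condition $\deg G < n$ (implicit in the nontrivial part of both bounds) suffices.
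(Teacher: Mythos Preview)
The paper states this proposition as a standard fact and gives no proof of its own, so there is nothing to compare your argument against; your approach is the classical one (evaluation map, kernel identified with $L(G-D)$, Riemann--Roch for the dimension, degree positivity for the distance) and it is correct. One small slip: you write ``$k \geq \dim L(G)$'', but of course $k = \dim L(G) - \dim L(G-D) \leq \dim L(G)$; what you mean, and what your own argument establishes, is that in the range $\deg G < n$ the kernel vanishes and hence $k = \dim L(G) \geq \deg G + 1 - g$. Also note that your remark about the case $\deg G \geq n$ being ``trivial'' is not quite accurate (the claimed lower bound on $k$ can genuinely fail there), but this regime is tacitly excluded in the usual statement of the result and in the paper's use of it.
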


Reed--Solomon codes are nothing but codes from $\P^1$ and,
as already mentioned, their main drawback  is that
for a fixed base field $\F_q$ the length of a Reed--Solomon code is
bounded from above by $|\P^1(\Fq)| = q+1$. On the other hand, for AG codes, the
length is allowed to reach the number $|\X (\Fq)|$ of rational points,
which can be arbitrarily large. Considering sequences of curves
whose number of rational points goes to infinity, it is possible to
design sequences of codes whose length goes to infinity while the
dimension and the minimum distance are linear in the length. Indeed,
regarding Proposition~\ref{prop:param_AG_codes}, the parameters of an
AG code satisfy
\[
  k+d \geq n + 1 - g.
\]
Set $R \eqdef \frac k n$ and $\delta \eqdef \frac d n$
then, 
\begin{equation}
  R+\delta \geq 1 + \frac 1 n- \frac{g}{n}\cdot 
\end{equation}
Introducing the {\em Ihara constant}
\[
  A(q) \eqdef \limsup_{g \rightarrow + \infty} \max_{\X} \frac{|\X (\Fq)|}{g},
\]
where the max is taken over all the curves of genus $g$, we deduce the
existence of sequences of codes whose asymptotic rates and relative distances
satisfy
\begin{equation}\label{eq:TVZbound}
  R+\delta \geq 1 - \frac{1}{A(q)}\cdot
\end{equation}
In the early 80's, Tsfasman, Vl\u{a}du\c{t} and Zink
\cite{tsfasman1982mn} and independently Ihara
\cite{ihara1981jfsuTokyo} proved that $A(q) \geq \sqrt q - 1$ when $q$
is a square and this quantity is reached by some sequences of modular
curves for $q = p^2$ with $p$ prime and by some sequences of Shimura
curves for general squares $q$.  Later on, Drinfeld and Vl\u{a}du\c{t}
\cite{vladut1983faa} proved that $A(q) \leq \sqrt q - 1$ for any prime
power $q$, showing that the aforementioned lower bound for $q$ square
was actually optimal.  These results lead to an impressive
breaktrhough in coding theory since for a square $q \geq 49$ this
lower bound exceeds the famous Gilbert Varshamov bound which is
safisfied by ``almost any code''.  In short, some sequences of AG
codes are asymptotically better than random codes, a particularly
striking and unexpected result.  Inequality \eqref{eq:TVZbound} is
usually referred to as the {\em Tsfasman--Vl\u{a}du\c{t}--Zink bound}
and, because of its famous coding theoretic consequences, has been the
first stone of an impressive line of works in order to obtain better
estimates for the Ihara constant when $q$ was not a square. Hundreds
of papers have been published in this direction by using either class
field towers or the so--called {\em recursive towers} introduced by
Garcia and Stichtenoth \cite{GS1995}.
Actually, the point of this paper is not to develop the
presentation of these lines of works which would require a fully
independent study. See for instance \cite{beelen22}.
The objective of the previous discussion was only to
explain why AG codes became so famous in the 80's.  They now represent
a wide research area and it is worth mentioning that their
interest and their applications actually exceed the single question of
which optimal pairs $(R, \delta)$ are reachable.

Indeed, in many situations where Reed--Solomon codes have been proposed
as a first solution, AG codes appear immediately after as a more
flexible option permitting to circumvent the issues raised by the
inherent drawback of Reed--Solomon codes~: their bounded length for a
fixed alphabet. This appears for instance in cloud computing where
{\em Locally Recoverable Codes} (LRC) are introduced in order to
address the practical issues raised by server failures in data
centers.  In \cite{tamo2014it}, Tamo and Barg propose a construction
derived from Reed--Solomon codes which turns out to be optimal with
respect to some Singleton--like bound but requiring the length to be
bounded. Later on, Barg, Tamo and Vl\u{a}du\c{t} \cite{barg2017it}
propose an AG--based construction providing the expected flexibility
in terms of blocklength. We refer the reader to
\cite[\S~15.9]{couvreur2021} for further details.

In the sequel, we discuss other coding theoretic issues where AG codes
have been involved. We also discuss open coding theoretic questions
where the use of algebraic geometry could be crucial.

 \section{Component wise product}\label{sec:starprod}
In the last decades, an apparently elementary operation which took an
increasing importance in coding theory is the component wise product
of vectors or codes, {\em i.e.} the multiplication operation in
$\Fq^n$ regarded as a product of rings:
\[
  \forall \av, \bv \in \Fq^n, \quad \av \star \bv \eqdef (a_1b_1,
  \dots, a_nb_n).
\]
This operation extends to codes where, to keep a linear structure,
the product of two codes is defined as the span of the products of
their vectors
\begin{definition}
  Given two codes $\code{A}, \code{B} \subseteq \Fq^n$,
  \[
    \code A \star \code B \eqdef \text{Span}_{\Fq} \left\{ \av \star \bv ~|~
    \av \in \code A, \bv \in \code B\right\}.
  \]
\end{definition}
Importing such a ring structure over $\Fq^n$ is actually rather
natural when studying codes such as AG codes which are obtained by
evaluation of functions living in a ring. Reconsidering
Definition~\ref{def:AGcodes} and denoting by $\mathcal O_{\X,\cP}$ the
semi-local ring $\bigcap_{P\in \cP} \mathcal O_{\X, P}$, the evaluation map
\[
  \text{ev}_{\cP} : \map{\mathcal O_{\X, \cP}}{\Fq^n}{f}{(f(P_1),
    \dots, f(P_n))}
\]
is a ring homomorphism from $\mathcal O_{\X, \cP}$ to $\Fq^n$ equipped
with the $\star$ product. Moreover, $\CL{\X}{\cP}{G}$ is nothing but
the image of $L(G)$ by this map. In summary, considering the $\star$
product is natural in order to take into account the rich algebraic
structure of AG codes coming from the parent curve.

In the last decades the $\star$ product took an increasing importance
in coding theory. For instance, we explain in \S~\ref{sec:decoding}
and \ref{sec:mpc} that the design of algebraic decoding algorithms or
the properties of some secret sharing schemes can easily be
interpreted in terms of this operation. The reader interested in an
in--depth study of the arithmetic of this operation together with some
of its applications is encouraged to look at the excellent survey
\cite{HR-AGCT14}.

For now, and because of the underlying algebraic geometric structure,
AG codes have a very peculiar behaviour with respect to the $\star$
product. Indeed, for a general code $\CC \subseteq \Fq^n$, we have
\begin{equation}\label{eq:random_square}
  \dim \CC \star \CC \leq \min \left\{n,\ {\dim \CC + 1 \choose
      2}\right\}.
\end{equation}
Moreover, it is proved in \cite{CCMZ2015} that for a random code the
above inequality is an equality with a high probability.  Typically,
the $\star$--square of a random code either fills in its ambient space
or has the same dimension as the symmetric square $S^2 \CC$ of $\CC$
(note that there is a surjective canonical map
$S^2 \CC \rightarrow \CC \star \CC$).  For non symmetric products, one
can observe very similar behaviours. Namely, given two codes
$\CC, \code D \subseteq \Fq^n$, we have
\[
  \dim \CC \star \code D \leq \min \left\{n,\ \dim \CC \cdot \dim
    \code D - {\dim \CC \cap \code D \choose 2}\right\},
\]
where again the inequality is typically an equality when $\CC, \code D$
are random.

On the other hand, the behaviour of AG codes with respect to the $\star$
operation strongly differs from that of random codes.

\begin{thm}\label{thm:prod_AGcodes}
  Let $\X, \cP$ be as in Definition~\ref{def:AGcodes} and
  $F, G$ be two rational divisors of $\X$ then
  \[
    \CL{\X}{\cP}{F} \star \CL{\X}{\cP}{G} \subseteq \CL{\X}{\cP}{F+G}.
  \]
  Moreover, the above inclusion is an equality when $\deg F \geq 2g$
  and $\deg G \geq 2g+1$.
\end{thm}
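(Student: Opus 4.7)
The inclusion is formal. The map $\text{ev}_\cP : \mathcal{O}_{\X,\cP} \to (\Fq^n, \star)$ is a ring homomorphism, so the image of a product of functions is the componentwise $\star$-product of their images. Since $L(F) \cdot L(G) \subseteq L(F+G)$ (because $\text{div}(fg) = \text{div}(f) + \text{div}(g) \geq -(F+G)$), taking $\Fq$-spans of evaluated products yields the claimed containment of codes.

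For the equality, I reduce the problem to surjectivity of the multiplication map
$$\mu : L(F) \otimes L(G) \longrightarrow L(F+G), \qquad f \otimes g \longmapsto fg.$$
By the very definition of an AG code, $\text{ev}_\cP$ surjects $L(F+G)$ onto $\CL{\X}{\cP}{F+G}$, so surjectivity of $\mu$ transfers to the codes via $\text{ev}_\cP$ and the ring homomorphism property above.

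The surjectivity of $\mu$ is a classical statement on multiplication of sections of line bundles on a curve, obtained through the \emph{base-point-free pencil trick}. Since $\deg F \geq 2g$, Riemann--Roch gives $\ell(F-P) = \ell(F) - 1$ for every point $P \in \X$, so $|F|$ is base-point free; pick $f_0, f_1 \in L(F)$ with no common zero. These yield the Koszul-type left exact sequence
$$0 \longrightarrow L(G-F) \stackrel{h \mapsto (f_1 h,\, -f_0 h)}{\longrightarrow} L(G) \oplus L(G) \stackrel{(g_0,g_1) \mapsto f_0 g_0 + f_1 g_1}{\longrightarrow} L(F+G),$$
whose image sits inside $L(F) \cdot L(G)$. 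A dimension count using Riemann--Roch reduces the surjectivity of the right-hand map (and hence of $\mu$) to the cohomological vanishing $\ell(K_\X - G + F) = 0$, by Serre duality.

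The chief obstacle is this last vanishing in the borderline regime where $\deg F$ and $\deg G$ are both close to $2g$: the naive sufficient condition $\deg(K_\X - G + F) < 0$ reads $\deg G - \deg F \geq 2g-1$, which is not forced by the hypotheses $\deg F \geq 2g, \deg G \geq 2g+1$. To close this gap over the full range, I would complement the pencil in $L(F)$ with the symmetric construction in $L(G)$ (which is also base-point free, in fact very ample since $\deg G \geq 2g+1$), or appeal directly to Mumford's classical theorem on multiplication of sections of line bundles on curves, which yields surjectivity of $\mu$ precisely under the hypotheses of the theorem.
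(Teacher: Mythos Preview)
Your reduction is correct and, in the end, identical to the paper's: the inclusion is handled exactly as you say, and for the equality the paper's entire proof is a citation of Mumford's theorem on multiplication of sections \cite{mumford1970cime}, which is precisely the reference you fall back on. Your unpacking of the reduction to surjectivity of $\mu : L(F)\otimes L(G)\to L(F+G)$ is more explicit than what the paper writes and is fine.

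One warning about your intermediate suggestion: ``complementing the pencil in $L(F)$ with the symmetric construction in $L(G)$'' does \emph{not} close the gap. In the borderline case $\deg F = 2g$, $\deg G = 2g+1$ with $g\geq 2$, one has $\deg(K_\X - G + F) = 2g-3 \geq 1$ and $\deg(K_\X - F + G) = 2g-1 \geq 1$, so neither $H^1(G-F)$ nor $H^1(F-G)$ is forced to vanish, and there is no reason the two partial images should together fill $L(F+G)$. Mumford's actual argument is an inductive refinement of the pencil trick (one lowers $\deg G$ by subtracting a well-chosen point and iterates), not a symmetrisation; so your second option---citing the theorem directly---is the only one that works, and it is exactly what the paper does.
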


\begin{proof}
  This is a direct consequence of \cite[Thm. 6]{mumford1970cime}.
  See \cite[Cor. 9]{couvreur2017it}.
\end{proof}

In particular, for a random code $\code R$ whose square does not fill
in the ambient space, according to~(\ref{eq:random_square}) and the
subsequent discussion, the dimension of $\code R \star \code R$ is
quadratic in $\dim \code R$. On the other hand, for an AG code
$\CL{\X}{\cP}{G}$ from a genus $g$ curve $\X$ with
$2g < \deg G < \frac n 2$, Theorem~\ref{thm:prod_AGcodes} together
with Riemann--Roch Theorem entail
\[
  \dim \CL{\X}{\cP}{G} \star \CL{\X}{\cP}{G} = 2 \dim \CL{\X}{\cP}{G} - 1 +g.
\]
In short, for low genera, the dimension of the square of an AG code
is somehow linear in the original dimension instead of being quadratic
for random codes. This significant difference is actually a strength
for AG codes for instance in order to perform an efficient error
correction. On the other hand, as explained further in
\S~\ref{sec:crypto}, it is also a weakness making AG codes bad
candidates for public key encryption. The subsequent sections discuss
several consequences of this observation on the behaviour of AG codes
with respect to the $\star$--product.

 \section{Decoding}\label{sec:decoding}
The design of efficient decoding algorithms for AG codes is an
important part of the theory starting in the late 80's with the work
of Justesen, Larsen, Jensen, Havemose and H{\o}holdt
\cite{justesen89it} and and culminating around 2000 with the works of
Guruswami and Sudan \cite{guruswami99it} on the list decoding of
Reed--Solomon and AG codes. We refer the reader to the excellent
surveys \cite{hoeholdt95it,beelen08} for further details on this
history.

Interestingly, the first works on the decoding of AG codes
\cite{justesen89it,skorobogatov90it} strongly involved the arithmetic
of the underlying curve. A few years later, Pellikaan
\cite{pellikaan92dm} and independently K\"otter \cite{koetter92acct}
noticed that one could get rid from geometry in the algorithm's
description and only take into account the non generic behaviour of AG
codes with respect to the $\star$--product. This point of view leads
to an observation, the very reason why AG codes are equipped with
polynomial time decoders while arbitrary codes are not is their
peculiar behaviour with respect to the $\star$--product.

\subsection{Algebraic decoding in a nutshell}
The decoding paradigm presented by Pellikaan, is now known as {\em Error
  Correcting Pairs}. We give here a rather unusual presentation of this
algorithm that we expect to be somehow more elementary to understand.

Consider a code $\CC$ and a received vector $\yv = \cv + \ev$ where
$\cv \in \CC$ and $\ev \in \Fq^n$ has weight $w_H(\ev) \leq t$,
where $w_H (\cdot)$ is defined in \S~\ref{sec:coding} and $t$ is
a constant that will be clarified further.
Obviously, $\yv$ should be considered as known while $\cv, \ev$ are not.
The key of algebraic decoding is to introduce an auxiliary code
$\code A$ in order to localise the error positions. The code $\code A$
should be chosen so that $\dim \code A > t$ and the product
$\code A \star \CC$ is ``not too large''. In particular
$\code A \star \CC \varsubsetneq \Fq^n$.

The first step of the algorithm consists in
computing the space
\[
  K \eqdef \left\{\av \in \code A ~|~ \av \star \yv \in \code A \star
    \CC \right\}.
\]
This space can be computed by Gaussian elimination with the very knowledge
of $\yv$ and $\code A, \code C$ while, by definition of $\code A \star \CC$
it actually equals
\[
  K = \left\{\av \in \code A ~|~ \av \star \ev\in \code A \star \code
    C \right\}.
\]
Next, the hope is that for any $\av \in K$, the product
$\av \star \ev$ is $0$. This is in particular what should happen
when the minimum distance of $\code A \star \code C$ exceeds $t$ since
$w_H(\av \star \ev) \leq w_H (\ev) \leq t$.  In this situation, any
$\av \in K$ actually vanishes at any nonzero position of $\ev$ and
hence one can expect that the common zero positions of the elements of
$K$ permit to identify the nonzero positions of $\ev$.  Once the
errors are localised, decoding turns out to be solvable by the
resolution of a linear system.

In summary, correcting $t$ errors for a code $\CC$ is possible as soon
as there exists an auxiliary code $\code A$ such that
\begin{enumerate}[(i)]
\item\label{it:dimA} $\dim \code A > t$;
\item\label{it:dminA*B} $d_{\text{min}} (\code A \star \code C) > t$;
\item\label{it:dminA+dminC}
  $d_{\text{min}}(\code A) + d_{\text{min}} (\CC) > n$.
\end{enumerate}

\begin{proof}[Sketch of proof]
  First, note that $K$ contains the so--called {\em shortening}
  of $\code A$ at the error positions, {\em i.e.} the subcode of
  $\code A$ of words vanishing where $\ev$ does not.  Under
  Condition~\eqref{it:dimA}, this shortening of $\mathcal A$ is nonzero
  and hence $K \neq 0$.  Next, because of this previous discussion,
  Condition (\ref{it:dminA*B}) asserts that $K$ actually equals this
  {\em shortening} of $\code A$.  Finally, Condition
  (\ref{it:dminA+dminC}) permits to make sure that, once the error
  positions are known, the resolution of a linear system will provide
  $\ev$ as the unique solution. See \cite[Prop. 2.4]{hoeholdt95it} for
  further details.
\end{proof}

\begin{remark}\label{rem:relax}
  Actually, Condition (\ref{it:dminA*B}) is rather restrictive and
  can be relaxed to
  $\dim \code A - t + \dim \code A \star \code \CC \leq n$ which
  permits to typically expect that
  $(K \star \ev) \cap \code A \star \code C = \{0\}$, which is
  sufficient to obtain that $K$ is the shortening of $\code A$ we look
  for. Similarly, Condition (\ref{it:dminA+dminC}) may be
  removed. Relaxing these conditions is possible at the cost that the
  algorithm may fail for some rare error patterns $\ev$.
\end{remark}

\begin{remark}
  Pellikaan's original description of the so--called {\em Error
    Correcting pairs} involves two auxiliary codes $\code A$ and $\code B$
  (see for instance \cite[Def. 12.1]{hoeholdt95it}). One can actually
  prove that the code $\code B = (\code A \star \code C)^\perp$ satisfies
  the conditions of Error Correcting Pairs.
\end{remark}

\subsection{Back to AG codes}
Now, if the code $\CC$ is the AG code $\CL{\X}{\cP}{G}$, then, given a
divisor $F$ of degree $\geq t + g$, the code $\code A \eqdef \CL{\X}{\cP}{F}$
satisfies all the expected requirements as soon as $n- \deg (F+G) > t$
which leads to
\begin{equation}\label{eq:radius}
  t \leq \frac{d^*-1}{2} - \frac g 2,
\end{equation}
where $d^* \eqdef n - \deg G$ is the {\em Goppa designed distance},
{\em i.e.}  the lower bound on the minimum distance given by
Proposition~\ref{prop:param_AG_codes}.

\subsection{The story continues}
Somehow the ``$-g/2$'' term in \eqref{eq:radius} seems disappointing
since one can expect a unique decoding up to $\frac{d-1}{2}$ errors.
If the previous algorithm is proved to reach the decoding radius
\eqref{eq:radius}, it turns out to be able to correct ``almost any''
error pattern of weight up to half the designed distance (more or less
because of Remark~\ref{rem:relax}). During the 90's, many publications
targeted to get rid of this $g/2$ term. This has been completed in
\cite{ehrhard1993it} and \cite{feng1993it} using two distinct
approaches.  Interestingly, if the radius \eqref{eq:radius} is reached
using a rather generic algorithm which does almost not use the
arithmetic of the underlying curves, the two aforementioned papers
improve the decoding radius while considering again the curve's
arithmetic.

Later on, a new question has been raised: {\em ``can one correct
  errors beyond half the minimum distance at the cost of possibly
returning a list of codewords instead of a single one''}. A positive
answer has been given by Sudan for Reed--Solomon codes
\cite{sudan97jcomp} and Guruswami and Sudan for AG codes
\cite{guruswami99it}.  The latter paper providing a polynomial time
algorithm correcting up to the so--called {\em Johnson bound}, a
radius under which any decoder is guaranteed to return a list whose
size is polynomial in the code length.

The reason why all the aforementioned algorithms succeed to decode AG
codes and not arbitrary ones, can somehow be understood in terms of
the peculiar behaviour of these codes with respect to the $\star$
product. See for instance
\cite{rosenkilde2018,couvreur2020dcc,puchinger2021}.

 \section{Code--based cryptography and McEliece encryption
  scheme}\label{sec:crypto}
Code--based cryptography dates back to 1978 with the seminal paper of
McEliece \cite{mceliece1978dsn} proposing a public key encryption
scheme whose security is related to the hardness of the decoding problem.
Informally, the principle is to take a code for which a decoding
algorithm correcting up to $t$ errors is known and to publish one of
its bases.  Encryption consists in encoding the plaintext into a
codeword and add some randomly chosen error pattern of weight $t$
while decryption consists in decoding, which is a possible task for
the legitimate receiver.

In terms of security, one expects the published version of the
code to be computationally indistinguishable from a random code. Thus,
under this indistinguishability assumption an attacker cannot do
better than trying to solve the decoding problem for an apparently
random code.  The latter problem is proved to be NP--complete
\cite{berlekamp1978it} but more importantly, it is supposed to be hard
{\em in average}, {\em i.e.} hard for randomly chosen inputs. Still,
up to now the best known algorithms for generic decoding have a time
complexity which is exponential in the weight of the error.

Among the various proposals to instantiate McEliece scheme,
Reed--Solomon and AG codes have been proposed
\cite{niederreiter1986pcit,janwa1996dcc}. It turns out that the
algebraic structure of these codes and in particular their structure
with respect to the component wise product mentioned in
\S~\ref{sec:starprod}, which is the key of the algebraic decoding as
explained in \S~\ref{sec:decoding}, also represents a strong weakness
of these codes.  Indeed, because of
Theorem~\ref{thm:prod_AGcodes} and the subsequent discussion, the
operation $\CC \mapsto \CC \star \CC$ precisely provides a polynomial
time distinguisher between AG codes and random ones making the latter
ones unusable for public key encryption. To turn this distinguisher
into a full key recovery attack, we refer the reader to
\cite{couvreur2017it}.

Therefore, if the raw use of AG codes is not a secure, it is still
possible to use ``deteriorate'' versions of them for which decoding is
still possible but with a much lower decoding radius $t$. For
instance, one can consider some random subcodes of AG codes with a
large enough codimension or take {\em subfield subcodes}, that is to
say, start from an AG code in $\F_{q^m}^n$ and only keep vectors whose
entries are all in the subfield $\Fq$. In some sense, McEliece's
original proposal was based on the latter idea with a genus 0
underlying curve.
We refer the reader to \cite[\S~15.7]{couvreur2021}
for further details on the history of cryptanalysis of AG codes.

 \section{Secret sharing}\label{sec:mpc}
In \cite{shamir1979}, A.~Shamir proposes a secure scheme for a set of
players to share a secret so that a coalition of $k$ of them can get
the secret while any coalition of less than $k$ players cannot get
even a partial information on the secret. The scheme is based on
Lagrange interpolation and can actually be interpreted in terms of
codes. For $n-1$ players and a given secret $s \in \Fq$, take an
$[n,k,d]_q$ code $\CC \subseteq \Fq^n$, choose an arbitrary vector
$\cv \in \CC$ whose last entry $c_n = s$ and distribute the $n-1$
other entries to the players. The reconstruction and security
properties of the scheme can actually be interpreted in terms of
the minimum distance of $\CC$ and its dual $\CC^\perp$.

\begin{lem}
  A coalition of $r > n - d_{\text{min}}(\CC)$ players can
  recover the secret while any coalition of
  $r' < d_{\text{min}}(\CC^\perp) - 1$
  players cannot get any information on the secret.
\end{lem}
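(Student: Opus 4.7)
The plan is to treat the two claims separately, starting with reconstruction since it only involves the minimum distance of $\CC$.

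For a coalition $I \subseteq \{1, \dots, n-1\}$ of size $r > n - d_{\text{min}}(\CC)$, the key observation I would establish is that $c_n$ is entirely determined by the restriction $(c_i)_{i \in I}$. Indeed, if $\cv, \cv' \in \CC$ coincide on $I$, their difference lies in $\CC$ and vanishes on $I$, hence has Hamming weight at most $n - r < d_{\text{min}}(\CC)$. Minimality of the distance forces $\cv - \cv' = 0$, so in particular $c_n = c'_n$. Consequently, the map $(c_i)_{i \in I} \longmapsto c_n$ is a well-defined $\Fq$--linear form that the coalition can evaluate from a generator matrix of $\CC$ by Gaussian elimination.

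For the privacy claim, fix a coalition $I \subseteq \{1, \dots, n-1\}$ of size $r' < d_{\text{min}}(\CC^\perp) - 1$ and assume $\cv$ is drawn uniformly from the codewords with $c_n = s$ (the natural interpretation of ``arbitrary'' in the scheme). ``No information on $s$'' amounts to saying that the conditional distribution of $(c_i)_{i \in I}$ does not depend on $s$. By linearity, this reduces to the surjectivity of the projection $\pi \colon \CC \longrightarrow \Fq^{I \cup \{n\}}$: such a linear map has all fibres of the same cardinality $|\ker \pi|$, so every tuple $(c_i)_{i \in I}$ occurs for each $s$ with the same multiplicity.

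The core step, and the one I expect to be the main conceptual obstacle, is then to prove this surjectivity, which is where $\CC^\perp$ enters. Using the standard duality between puncturing and shortening, the orthogonal of $\pi(\CC)$ inside $\Fq^{I \cup \{n\}}$ is obtained by restricting to $I \cup \{n\}$ those codewords of $\CC^\perp$ whose support is already contained in $I \cup \{n\}$. Such a codeword would have Hamming weight at most $|I \cup \{n\}| = r' + 1 < d_{\text{min}}(\CC^\perp)$, so it must vanish. Therefore $\pi(\CC)^\perp = \{0\}$, $\pi$ is surjective, and privacy follows. Everything else is bookkeeping with weights and dimensions.
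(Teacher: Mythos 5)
Your proof is correct and follows essentially the same route as the paper: reconstruction via injectivity of the projection $\CC \to \CC_I$, and privacy via surjectivity of $\CC \to \CC_{I\cup\{n\}}$ combined with equidistribution of the fibres of a linear map. You merely fill in the two facts the paper leaves as assertions (injectivity from the minimum weight of $\CC$, surjectivity from the puncturing/shortening duality with $\CC^\perp$), which is exactly the intended justification.
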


\begin{proof}
  Given $I \subseteq \{1, \dots, n\}$, denote by $\CC_{I}$ the image
  of $\CC$ under the canonical projection on the entries with
  index in $I$:
  \[\cv \longmapsto \cv_I \eqdef {(c_i)}_{i \in I}~.\] The first assertion is due to the fact that if
  $|I|>n - d_{\text{min}}(\CC)$, then this canonical projection
  $\CC \rightarrow \CC_I$ is injective. Hence, the knowledge of
  $\cv_I$ is enough to recover the whole vector $\cv$ and in
  particular the secret $s = c_n$.

  On the other hand, if $|I| < d_{\text{min}}(\CC^\perp) - 1$, then
  one can prove that $\CC_{I \cup \{n\}} = \Fq^{|I|+1}$ and hence for
  any $s' \in \F_q$ there exists $\cv' \in \CC$ such that
  $\cv'_n = s'$ and $\cv'_I = \cv_I$. Thus, with the only knowledge of
  $\cv_I$, the possible values of the secret are uniformly
  distributed.
\end{proof}

The coding theoretic interpretation of Shamir's scheme rests on a
$k$--dimensional Reed--Solomon code.  It permits the secret recovery by
any $k$--tuple of users while coalitions of $r<k$ users cannot get
any information on the secret. 

Another interesting feature of Shamir's scheme is that it is {\em
  linear}: if one wants to perform linear combinations of secrets, the
users only have to perform separately the linear combination of their
own shares and the result of a legal coalition will be the expected
linear combination. Next, one could expect to be able to perform
further arithmetic operations on the secrets such as
multiplications. In such situations the code $\CC \star \CC$ enters
the game again. This is at the core of the so--called {\em arithmetic
  secret sharing schemes} which can serve as a basis for secure {\em
  multiparty computation} protocols \cite{CDM2000}.  Here we refer the
reader to \cite[Chap.~12]{CDN2015book} and
\cite[\S~15.8.4]{couvreur2021} for further details on the expected
properties a code should have to provide a good arithmetic secret
sharing scheme.  Note that very similarly to other applications,
Reed--Solomon codes --- yielding Shamir's scheme --- are somehow
optimal but suffer from the length restriction: the number of players
cannot exceed the size of the ground field {\em i.e.} the cardinality
of the set of secrets.  Next, AG codes appear to be the best solution
to address this limitation.

In this context, an open question remains: {\em which codes may be
  suitable for arithmetic secret sharing?} At least a first question
could be {\em which codes $\CC$ satisfy the property that
  $\CC \star \CC$ is ``small''?}.  According to
\cite[Thm.~12.19]{CDM2000}, the coding theoretic requirements to
design a multiplicative secret sharing scheme consist in lower bounds
on the minimum distances of $\CC^\perp$ and $\CC \star \CC$. According
to Singleton bound~(\ref{eq:Singleton}), this entails a lower bound on
the dimension of $\CC$ and an upper bound on that of $\CC \star
\CC$. Then, finding codes $\CC$ of fixed dimension $k$ such that
$\CC \star \CC$ has the least possible dimension is a question of
interest and classifying such codes is a crucial question which
remains widely open up to now and that is discussed in the sequel.

\subsection{Which codes have small squares?}
Let us focus on the following problem, {\em which codes
  $\CC \subseteq \Fq^n$ of dimension $k$ have a square of small
  dimension?}  We first need to clarify what is meant by
``small''. For this we need the following definition.
\begin{definition}\label{def:degenerate}
  A code $\code A \subseteq \Fq^n$ is said to be {\em degenerate} if
  it is the direct sum of two subcodes with disjoint supports.
  Equivalently, there exist subcodes
  $\code{A}_1, \code{A}_2 \subseteq \Fq^n$ such that
  $\code A = \code{A}_1 \oplus \code{A}_2$ and
  $\code A_1 \star \code A_2 = \{0\}$.
\end{definition}

Now, according to \cite[Thm.~2.4]{hlx02}, as soon as $\CC \star \CC$
is {\em non degenerate}, there is an analog of Cauchy--Davenport
formula in additive combinatorics which asserts that
\[
  \dim \CC \star \CC \geq 2\dim \CC - 1.
\]
Next, for any positive integer $\gamma$, what can one say about codes
satisfying
\begin{equation}\label{eq:vosper}
  \dim \CC \star \CC = 2 \dim \CC - 1 + \gamma?
\end{equation}
The quantity $\gamma$ should be ``small'' and inspired by additive
combinatorics, one can first target Freiman--like results
\cite[Thm.~5.11]{tao06book} and study the case where
$\gamma \leq \dim \CC - 3$.

First, one can note that AG codes yield a solution of the raised
problem.  Indeed, Theorem~\ref{thm:prod_AGcodes} asserts that an AG
code $\CL{\X}{\cP}{G}$ from a curve of genus $\gamma$ with
$\deg G > 2\gamma$ precisely satisfies this property.  However, they are
not the only ones. It is also possible for some subcodes of AG codes
from curves of genus $<\gamma$ to satisfy \eqref{eq:vosper}. For
instance, for $\gamma =1$, Property (\ref{eq:vosper}) is clearly
satisfied by AG codes from elliptic curves but also by some codes from
a curve of genus $0$ as explained in the example below.

\begin{example}
  Consider the projective line $\P^1$, denote by $P_{\infty}$ the
  place at infinity and by $\cP$ an ordered sequence of distinct degree
  $1$ finite places. The code $\CL{\P^1}{\cP}{kP_{\infty}}$
  is nothing but a $(k+1)$--dimensional Reed--Solomon code and
  let $\CC$ be its subcode spanned by the evaluations of the
  functions $1, x^2, x^3, \dots, x^k$, that is to say, removing $x$.
  The code $\CC\star \CC$ will be spanned by the evaluations of
  $1, x^2, x^3, \dots, x^{2k}$ and hence will have dimension
  $2k = 2 \dim \CC$ while being constructed from a genus $0$ curve.
\end{example}

\begin{question}
  Let $k$ be a positive integer
  and $0 < \gamma \leq k-3$.  Are there $k$--dimensional codes
  $\CC \subseteq \Fq^n$, such that $\CC \star \CC$ is non degenerate,
  satisfies
  \[
    \dim \CC \star \CC = 2 \dim \CC - 1 + \gamma
  \]
  and $\CC$ is {\bf not} contained in an AG code from a curve of genus
  $g\leq \gamma$?
\end{question}

For $\gamma = 0$, it has been proved by Mirandola and Z\'emor
\cite{MZ2015} that MDS codes ({\em i.e.} codes whose minimum distance
reaches the Singleton bound (\ref{eq:Singleton})) which satisfy
$\dim \CC \star \CC = 2 \dim \CC - 1$ are equivalent to Reed--Solomon
codes. Interestingly, this fact was already known from
an algebraic geometry point of view as Castelnuovo's Lemma (see for instance
\cite[\S~III.2,~p.~120]{arbarello1985book}).

For larger $\gamma$'s, the problem is still open. The following
paragraphs discuss a possible geometric approach to study this
problem. We acknowledge that this discussion raises more questions
than answers.

\subsection{The quadratic hull of a code}
At least a first question could be: {\em do such codes always arise
  from curves?}  In \cite{randriam2021} a notion called {\em quadratic
  hull} of a code, associates an algebraic variety to an arbitrary
code in the following manner.  Given a $k$--dimensional code
$\CC \subseteq \Fq^n$, consider a generator matrix $G$ for $\CC$, {\em
  i.e.} a $k \times n$ matrix whose rows span $\CC$.  Assuming that
the columns of $G$ are pairwise independent, and regarding these
columns as points of $\P^{k-1}$, we associate to $\CC$ the
corresponding $n$--tuple $V(\CC)$ of elements of $\P^{k-1}(\Fq)$.  By
this manner, $\CC$ can be regarded as an evaluation code: it is the
code of evaluations of $H^0(\P^{k-1}, \mathcal O(1))$ at the points of
$V(\CC)$.  Moreover, $\CC \star \CC$ can be regarded in this way as
the code of evaluations of $H^0(\P^{k-1}, \mathcal O(2))$ at these
same points! Let $I_2(V(\CC))$ be the space of quadratic forms
vanishing on $V(\CC)$, we have the following exact sequence
\begin{equation}\label{eq:exact}
  0 \longrightarrow I_2(V(\CC)) \longrightarrow
  H^0(\P^{k-1}, \mathcal O(2))
  \longrightarrow \CC \star \CC \longrightarrow 0.
\end{equation}
The {\em quadratic hull} $\HC$ of $\CC$ is the variety defined by the
homogeneous ideal of $\Fq[X_0, \dots, X_{k-1}]$ spanned by
$I_2(V(\CC))$. Equivalently, it is the intersection of quadrics containing
$V(\CC)$.  Clearly, $\HC$ contains $V(\CC)$ but may contain further
elements.

\begin{prop}
  Let $\X$ be a curve of genus $g$, $\cP$ be an ordered $n$--tuple of
  rational points of $\X$ and $G$ be a divisor such that
  $2g+2 \leq \deg G < \frac n 2$. Then, the quadratic hull of
  $\CL{\X}{\cP}{G}$ equals $\X$.
\end{prop}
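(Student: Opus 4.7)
The plan is to realise both $\HC$ and $\X$ as the vanishing locus in $\P^{k-1}$ (where $k = \deg G + 1 - g$, by Proposition~\ref{prop:param_AG_codes}) of the same set of quadratic forms. The hypothesis $\deg G \geq 2g+1$ implies that the complete linear system $|G|$ is very ample, so I would start by fixing the resulting closed embedding $\phi : \X \hookrightarrow \P^{k-1}$. Very ampleness separates the points of $\cP$, so $V(\CC) = (\phi(P_1), \dots, \phi(P_n))$ is a genuine $n$--tuple of pairwise distinct points of $\P^{k-1}$, and the definition of $\HC$ applies exactly in the form stated.

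The crux of the argument is then to establish the equality $I_2(V(\CC)) = I_2(\phi(\X))$. The inclusion $I_2(\phi(\X)) \subseteq I_2(V(\CC))$ is tautological from $V(\CC) \subseteq \phi(\X)$. For the reverse inclusion, given a quadratic form $Q \in I_2(V(\CC))$, its restriction to $\phi(\X)$ is a global section of $\mathcal O_\X(2G)$, that is, an element of $L(2G)$, which vanishes at every point of $\cP$. The hypothesis $\deg G < n/2$ gives $\deg(2G) < n$, hence a nonzero function in $L(2G)$ has strictly fewer than $n$ zeros; equivalently, evaluation at $\cP$ is injective on $L(2G)$. Therefore the restriction of $Q$ is zero, which means $Q$ already vanishes on $\phi(\X)$, so $Q \in I_2(\phi(\X))$.

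With this identification in hand, $\HC$ is the common zero locus of $I_2(\phi(\X))$, and it remains only to show that this equals $\phi(\X)$ itself, i.e. that the embedded curve is cut out set-theoretically by its quadrics. This is the step I expect to be the main external input, and it is precisely Mumford's classical theorem: for $\deg G \geq 2g+2$ the embedded curve $\phi(\X) \subset \P^{k-1}$ is projectively normal and its homogeneous ideal is generated in degree $2$. This is \cite[Thm.~6]{mumford1970cime}, the very same result that underlies Theorem~\ref{thm:prod_AGcodes}, which is reassuring in that no new heavy machinery is needed beyond what the paper has already invoked. Combining everything yields $\HC = \phi(\X) \cong \X$.
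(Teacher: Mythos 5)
Your argument is correct and follows essentially the same route as the paper: very ampleness of $G$ gives the embedding, a degree count shows every quadric through $V(\CC)$ contains the whole of $\phi(\X)$ (your injectivity of evaluation on $L(2G)$ is exactly the paper's B\'ezout argument in different clothing), and one concludes with the fact that the embedded curve is the intersection of the quadrics containing it. The only quibble is the attribution of that last step: the paper invokes Saint-Donat's theorem \cite{saintdonat:1972} for the generation of the ideal in degree $2$ when $\deg G \geq 2g+2$, whereas \cite[Thm.~6]{mumford1970cime} is the multiplication-map surjectivity $L(F)\otimes L(G)\twoheadrightarrow L(F+G)$ underlying Theorem~\ref{thm:prod_AGcodes}, which yields projective normality but not, by itself, that the homogeneous ideal is generated by quadrics.
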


\begin{proof}
  The proof can be found in \cite[Thm.~2]{MMP2014}, we adapt it here
  to the current notation and terminology.  The condition
  $\deg G \geq 2g+2$ entails the very ampleness of $G$ and, up to
  projective equivalence the corresponding embedding
  $\varphi_G : \X \rightarrow \P(L(G)) \simeq \P^{k-1}$ sends
  precisely $\cP$ onto $V(\CC)$. The curve $\varphi_G (\X)$
  has embedding degree $\deg G$ in $\P^{k-1}$ and hence, any quadric
  hypersurface containing $V(\CC)$ contains the whole $\X$.
  Indeed, if not, the quadric hypersurface would intersect
  $\varphi_G (\X)$ at $n > 2 \deg G$ points (by assumption on
  $\deg G$) which would contradict B\'ezout's Theorem. Therefore, the
  quadratic hull of $\CC$ contains $\varphi_G (\X)$. Next, using Saint
  Donat's result \cite{saintdonat:1972}, such an embedding of $\X$ is
  an intersection of quadrics, which concludes the proof.
\end{proof}

Now the question is, {\em what if $\CC$ is not supposed to arise from a
curve but $\CC \star \CC$ is non degenerate and $\dim \CC \star \CC \leq 2 \dim \CC - 1 + \gamma$ with
$\gamma \leq \dim \CC - 3$?} Is it possible that this code is not
contained in an AG code from a curve of genus $g \leq \gamma$?  The
point is that if $\dim \CC \star \CC$ is small, then 
$\dim I_2(V(\CC))$ is large and one could reasonably expect the quadratic
hull to have a small dimension. Actually, one can prove the following
statement.

\begin{thm}\label{thm:freiman}
  Let $\CC \subseteq \Fq^n$ such that $\CC \star \CC$ is non
  degenerate and $\dim \CC \star \CC \leq 3 \dim \CC - 4$. Assume
  moreover that the quadratic hull $\HC$ of $\CC$ is a reduced absolutely
  irreducible variety. Then, $\HC$ is a curve.
\end{thm}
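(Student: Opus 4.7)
The plan is to interpret the hypothesis $\dim \CC \star \CC \leq 3k - 4$ (with $k = \dim \CC$) as a bound on the Hilbert function $h_{\HC}$ of the quadratic hull $\HC \subset \P^{k-1}$, and then rule out $\dim \HC \geq 2$ by iterated hyperplane sections.

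First, I would unpack the exact sequence \eqref{eq:exact}: since $I(\HC)_2 = I_2(V(\CC))$ (both equal the quadrics vanishing on $V(\CC)$, or equivalently on the scheme $\HC$ they cut out), one has $\dim \CC \star \CC = h_{\HC}(2)$. The equality $\dim_{\Fq}\CC = k$ forces the columns of any generator matrix of $\CC$ to span $\Fq^k$, so $V(\CC)$, and hence $\HC$, is nondegenerate in $\P^{k-1}$; in particular $h_{\HC}(1) = k$.

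Assume for contradiction that $\dim \HC \geq 2$ and pass to $\bar{\Fq}$ (harmless for both Hilbert functions and absolute irreducibility). For a general hyperplane $H$ with linear equation $\ell$, the ring $R/I(\HC)$ is a graded domain because $\HC$ is reduced and absolutely irreducible, so $\ell$ is a non-zero-divisor and the short exact sequence $0 \to (R/I(\HC))(-1) \xrightarrow{\cdot \ell} R/I(\HC) \to R/(I(\HC)+(\ell)) \to 0$ yields $h_{\HC}(2) \geq h_{\HC}(1) + h_{\HC \cap H}(2)$, where the inequality accommodates any saturation gap between $(I(\HC) + (\ell))/(\ell)$ and the saturated ideal of $\HC \cap H$. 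By Bertini's theorem (applicable thanks to $\dim \HC \geq 2$ and absolute irreducibility), $\HC \cap H$ is absolutely irreducible and reduced; and it is nondegenerate in $H \simeq \P^{k-2}$, for a degenerate section would pull back to force $\HC$ inside a hyperplane of $\P^{k-1}$. Hence $h_{\HC \cap H}(1) = k - 1$. Since $\dim (\HC \cap H) \geq 1$, iterating with a second general hyperplane $H'$ gives $h_{\HC \cap H}(2) \geq (k - 1) + h_Z(2)$, where $Z := \HC \cap H \cap H'$ is a reduced equidimensional nondegenerate subscheme of $H \cap H' \simeq \P^{k-3}$, so $h_Z(1) = k - 2$. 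Because $Z$ is reduced and nonempty, its homogeneous coordinate ring has depth $\geq 1$, its Hilbert function is nondecreasing, and $h_Z(2) \geq k - 2$.

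Combining these inequalities yields $h_{\HC}(2) \geq k + (k-1) + (k-2) = 3k - 3$, contradicting $h_{\HC}(2) \leq 3k - 4$. Therefore $\dim \HC \leq 1$; and since $\HC$ is absolutely irreducible and must contain $V(\CC)$, which is nondegenerate in $\P^{k-1}$ and hence includes at least $k \geq 2$ distinct points, we conclude $\dim \HC = 1$, i.e., $\HC$ is a curve. The main delicate step will be the iterated invocation of Bertini over $\bar{\Fq}$ --- guaranteeing that the successive hyperplane sections remain absolutely irreducible, reduced, and nondegenerate so that the hyperplane-section inequality for Hilbert functions applies at each step --- which is precisely why the hypotheses demand that $\HC$ be reduced and absolutely irreducible.
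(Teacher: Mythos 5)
Your argument is correct in substance but follows a genuinely different route from the paper's. The paper does not estimate Hilbert functions at all: it uses the reducedness and irreducibility of $\HC$ to pass to its function field $K$, identifies $\CC$ with the image $C$ of $H^0(\P^{k-1},\mathcal O(1))$ in $K$ and $\CC\star\CC$ with $C\cdot C$ (this is where Lemma~\ref{lem:technical_lemma} enters), observes that $C$ generates $K$ as an $\Fq$--algebra, and then invokes the function field analogue of Freiman's theorem \cite[Thm.~4.2]{BCZ18}, which forbids the transcendence degree of $K$ from lying in $\{2,\dots,k-1\}$ once $\dim C\cdot C\leq 3\dim C-4$. You replace that black box by a direct Castelnuovo--type estimate: an irreducible, reduced, nondegenerate variety $X\subseteq\P^{k-1}$ of dimension $\geq 2$ satisfies $h_X(2)\geq 3k-3$, proved by slicing twice with general hyperplanes over $\overline{\Fq}$. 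Your numerology is right and the bound is sharp (e.g.\ for quadric surfaces or the Veronese surface), so your approach is more self--contained and elementary, while the paper's reduction is shorter and places the theorem squarely within the additive--combinatorics analogy the section is built around.

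One step of your argument is not justified correctly: the nondegeneracy of the general hyperplane section. That $\HC\cap H$ lies in a codimension--two linear subspace of $\P^{k-1}$ does not ``pull back'' to put $\HC$ inside a hyperplane, so your one--line justification fails as stated. The fact you need --- that a general hyperplane section of an irreducible nondegenerate variety of dimension $\geq 1$ over an algebraically closed field spans the hyperplane --- is true, but it is itself a nontrivial classical lemma (the easy half of the general position lemma; see e.g.\ \cite[Ch.~III]{arbarello1985book}) and must be invoked or proved properly, since your whole chain $h_{\HC}(2)\geq k+(k-1)+(k-2)$ rests on the equalities $h_{\HC\cap H}(1)=k-1$ and $h_Z(1)=k-2$ for the saturated ideals of the sections. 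With that lemma in place, the rest goes through: the non--zero--divisor exact sequences have the right direction with respect to the saturation gap, Bertini over $\overline{\Fq}$ keeps the sections reduced and irreducible while the dimension is $\geq 2$, and the reduced section $Z$ has a linear non--zero--divisor, so its Hilbert function is nondecreasing.
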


\begin{remark}
  The above statement is mentioned in \cite[\S~2.11]{randriam2021}
  where the proof is sketched.
\end{remark}

To prove of Theorem~\ref{thm:freiman} we need the following Lemma.

\begin{lem}\label{lem:technical_lemma}
  Under the hypotheses of Theorem~\ref{thm:freiman},
  let $I_2(\HC)$ be the subspace of
  $H^0(\mathbf{P}^{k-1}, \mathcal{O}(2))$ of quadratic forms vanishing on
  $\HC$. Then,
  \[
    I_2(\HC) = I_2(V(\CC)).
  \]
\end{lem}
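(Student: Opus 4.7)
The plan is to establish both inclusions by unwinding the definition of $\HC$ as the common zero locus of the quadrics in $I_2(V(\CC))$.

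For the inclusion $I_2(V(\CC)) \subseteq I_2(\HC)$: by the very definition of the quadratic hull, every quadric $Q \in I_2(V(\CC))$ belongs to the defining ideal of $\HC$, and therefore vanishes identically on $\HC$. This inclusion is formal and uses none of the hypotheses.

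For the reverse inclusion $I_2(\HC) \subseteq I_2(V(\CC))$: each of the $n$ points in the tuple $V(\CC)$ is, by construction of $I_2(V(\CC))$, a common zero of every quadric in that subspace; hence $V(\CC) \subseteq \HC$. Any quadric vanishing on $\HC$ therefore vanishes on every point of $V(\CC)$ and thus lies in $I_2(V(\CC))$.

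The only subtle point, and the only place where the hypotheses of Theorem~\ref{thm:freiman} need to be invoked, is the scheme-theoretic identification of $I_2(\HC)$ with the degree-two part of the saturated ideal $I(\HC)$: a priori, the ideal $J = (I_2(V(\CC)))$ cutting out $\HC$ could differ from its radical, and one has to rule out the possibility that $\sqrt{J}$ acquires extra quadratic forms not already in $J_2 = I_2(V(\CC))$. The reducedness of $\HC$ is exactly what prevents this pathology, and so the two inclusions above genuinely match up at the level of quadratic forms. I expect this identification to be the main --- and essentially the only --- conceptual check; the dimensional bound $\dim \CC \star \CC \leq 3 \dim \CC - 4$ does not appear to be used in the lemma itself, and presumably enters only in the subsequent deduction that $\HC$ must be one-dimensional.
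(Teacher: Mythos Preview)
Your proposal is correct and follows essentially the same approach as the paper, whose proof is the single sentence: ``Using the hypothesis that $\HC$ is reduced, the result is a direct consequence of Hilbert Nullstellensatz.'' Your two inclusions unpack exactly this.

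One minor remark on your ``subtle point'' paragraph: your own second inclusion already shows that any quadric in $(\sqrt{J})_2$ vanishes on $V(\CC)$ (since $V(\CC)\subseteq\HC$), so the worry that $\sqrt{J}$ might acquire extra quadrics is in fact resolved by that inclusion rather than by reducedness per se. Where reducedness genuinely enters --- and this is what the paper's appeal to the Nullstellensatz is signalling --- is in the \emph{use} of the lemma inside Theorem~\ref{thm:freiman}: there one needs the kernel of $S^2C\to C\cdot C$ inside the function field $K$ to coincide with the set-theoretic $I_2(\HC)$, and it is the reducedness of $\HC$ (together with the Nullstellensatz) that makes ``restricting to zero in $K$'' equivalent to ``vanishing at all geometric points''. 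Your observation that the dimensional bound $\dim\CC\star\CC\leq 3\dim\CC-4$ plays no role in the lemma is correct.
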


\begin{proof}
  Using the hypothesis that $\HC$ is reduced, the result is a direct
  consequence of Hilbert Nullstellensatz.
\end{proof}

\begin{proof}[Proof of Theorem~\ref{thm:freiman}]
  Assuming that the quadratic hull is reduced and irreducible, one can
  define its field of functions $K$.  After fixing an arbitrary choice
  of coordinates in $\P^{k-1}$, one can regard
  $H^0(\P^{k-1}, \mathcal O(1))$ as a space of rational fractions and
  there is a canonical restriction map
  \[
    H^0(\P^{k-1}, \mathcal O(1)) \longrightarrow K
  \]
  whose image is denoted by $C$. Since $\CC$ has dimension $k$, the
  $k\times n$ generator matrix $G$ has full rank and hence its columns
  span $\Fq^k$.  Consequently, $V(\CC)$ spans $\P^{k-1}$ or equivalently
  is not contained in any hyperplane. Thus, so does $\HC$
  and hence, the above restriction map is an isomorphim: $C$ has
  dimension $k$ and is isomorphic to $\CC$. Moreover, since $C$
  contains the restrictions to $\HC$ of a coordinate
  system of $\P^{k-1}$, it generates the field $K$ as an
  $\F_q$--algebra.

  Consider the space
  \[
    C \cdot C \eqdef \text{Span} \left\{ uv \in K ~|~ u,v\in C \right\}.
  \]
  We have a canonical surjective map from the symmetric square $S^2 C$
  into $C \cdot C$. Since $C$ is isomorphic to
  $H^0(\P^{k-1}, \mathcal O (1))$, its symmetric square is isomorphic
  to $H^0 (\P^{k-1}, \mathcal O(2))$.  In short, $C \cdot C$ is the
  space of quadratic forms on $\P^{k-1}$ restricted to $\HC$. Hence,
  the kernel of the canonical surjective map
  $S^2 C \rightarrow C \cdot C$ is nothing but the space $I_2(\HC)$ of
  quadratic forms vanishing on $\HC$. By
  Lemma~\ref{lem:technical_lemma}, we deduce that the latter kernel
  $I_2(\HC)$ is nothing but $I_2(V(\CC))$.  In summary, we have the
  following diagram
  \begin{center}
    \begin{tikzpicture}
      \node at (1,1.5) {\(0\)};
      \node at (3,1.5) {\(I_2(V(\CC))\)};
      \node at (6,1.5) {\(S^2 C \)};
      \node at (9,1.5) {\( C \cdot C \)};
      \node at (11,1.5) {\(0 \)};
      \node at (1,0) {\(0\)};
      \node at (3,0) {\(I_2(V(\CC))\)};
      \node at (6,0) {\(H^0(\mathbf{P}^{k-1}, \mathcal O (2))\)};
      \node at (9,0) {\( \CC \star \CC \)};
      \node at (11,0) {\(0\)};
      \draw[->] (1.25,1.52) to (2.2,1.52) ;
      \draw[->] (3.85,1.52) to (5.55,1.52) ;
      \draw[->] (6.5,1.52) to (8.45,1.52) ;
      \draw[->] (9.6,1.52) to (10.8,1.52) ;
      \draw[->] (1.25,0.02) to (2.2,0.02) ;
      \draw[->] (3.85,0.02) to (4.7,0.02) ;
      \draw[->] (7.35,0.02) to (8.45,0.02) ;
      \draw[->] (9.6,0.02) to (10.8,0.02) ;
      \draw[-]  (2.97,0.35) to (2.97,1.2);
      \draw[-]  (3.03,0.35) to (3.03,1.2);
      \draw[->]  (6,1.2) to (6,0.35);
      \node[rotate=90] at (6.15,0.8) {\(\sim\)};
    \end{tikzpicture}
  \end{center}
  Consequently, $C \cdot C$ is isomorphic to $\CC \star \CC$. We
  already mentioned that $C$ generates $K$ as an $\F_q$--algebra and
  just proved that $\dim C\cdot C \leq 3 \dim C - 4$. Then, from
  \cite[Thm.~4.2]{BCZ18}, the transcendence degree of $K$ over $\F_q$
  cannot lie in the range $\{2, \dots, k-1\}$ and hence should be
  $1$. This proves that $\HC$ is a curve.
\end{proof}

As a conclusion, we could get partial answers on the original problem if we
were able to get criteria on the code for its quadratic hull to be a reduced
irreducible variety.

\begin{question}
  Given a code $\CC \subseteq \Fq^n$ such that $\CC \star \CC$ is non
  degenerate and $\dim \CC \star \CC \leq 3 \dim \CC - 4$, under which
  conditions the quadratic hull of $\CC$ is reduced and irreducible?
\end{question}

Note that some conditions should hold for the hull to be irreducible
as suggests the example below.

\begin{example}
  This example appears in \cite[\S~7.9]{randriam2021}.
  Let $\Y \subseteq \P^3$ be the union of a plane and a line not
  contained in it. Let $\cP$ be the set of rational points of $\Y$ and
  consider the code $\CC$ obtained as the set of evaluations of linear
  forms, {\em i.e.} the elements of $H^0(\P^3, \mathcal O (1))$ at the
  elements of $\cP$.  For $q$ large enough, it is simple to observe
  that any quadric containing $\cP$ contains $\Y$. Hence, the quadratic
  hull of the code equals $\Y$. Moreover, the dimension of the space of
  quadratic forms vanishing on $\Y$ is $2$ which entails that
  $\dim \CC \star \CC = \dim H^{0}(\P^3, \mathcal O (2)) - 2 = 8 = 2
  \dim \CC$. On the other hand, for $q$ large enough, one can
  prove that $\cP$ cannot be contained in a curve of genus $\leq 1$
  since $|\cP| = q^2+2q+1$ while Weil's bound asserts that curves
  of genus $\leq 1$ have at most $q + 1 + 2\sqrt q$ rational points.
\end{example}

 \section{A concluding remark}
The present note dealt with codes from curves while one could also
consider codes from higher dimensional varieties.  The latter
topic, without being as investigated in the literature as codes from
curves remains of deep interest: we refer the reader to the survey
\cite{little2008chapter}.
However, our focus was on the behaviour of codes with respect to the
component wise product and codes from curves are very particular from
this point of view. It can be related to the fact that Hilbert
polynomials of curves are linear while they are quadratic for
surfaces. Using this point of view, it seems that one cannot expect to
decode codes from higher dimensional varieties as efficiently as codes
from curves. Similarly, higher dimensional varieties are probably much
less interesting for secret sharing applications.
On the other hand, codes from higher dimensional varieties may remain of deep
interest for other applications such as cloud storage and the design
of locally recoverable codes.

\section*{Acknowledgement}
The author expresses his deep gratitude to Hugues Randriambololona for his
very valuable comments.

\bibliographystyle{alpha}
\newcommand{\etalchar}[1]{$^{#1}$}

\end{document}